\newcommand{\asdim}{\mathrm{asdim}}
\newcommand{\E}{\mathcal E}
\newcommand{\F}{\mathcal F}
\newcommand{\U}{\mathcal U}
\newcommand{\V}{\mathcal V}
\newcommand{\w}{\omega}
\newcommand{\IZ}{\mathbb Z}
\newcommand{\IR}{\mathbb R}
\newcommand{\dist}{\mathrm{dist}}
\newtheorem{theorem}{Theorem}
\newtheorem{lemma}{Lemma}
\newtheorem{corollary}{Corollary}
\title{On the asymptotic dimension of  products of coarse spaces}
\author{Iryna Banakh and Taras Banakh}
\dedicatory{Dedicated to Sergey Antonyan on the occasion of his 65th birthday}
\address{I.Banakh: Ya. Pidstryhach Institute for Applied Problems of Mechanics and Mathematics of NASU, Naukova 3b, Lviv}
\address{T.Banakh: Ivan Franko National University of Lviv (Ukraine) and Jan Kochanowski University in Kielce (Poland)}
\email{ibanakh@yahoo.com, t.o.banakh@gmail.com}
\subjclass{52C17; 55M10}
\keywords{Coarse space, product, asymptotic dimension, G-space}
\begin{document}

\begin{abstract}We prove that for any coarse spaces $X_1,\dots,X_n$ of asymptotic dimension $\ge 1$, the product $X=X_1\times\dots\times X_n$ has asymptotic dimension $\ge n$. Another result states that a finitary coarse space $Z$ has $\asdim (Z)\ge n$ if $Z$ admits an almost free action of the group $\IZ^n$. We deduce these inequalities from the following combinatorial result (that generalizes  the Hex Theorem of Gale): for any cover $\F$ of a discrete box $K=k_1\times \dots \times k_n$, either some set $F\in\F$ contains a chain connecting two opposite faces of $K$ or there exists a set $B\subseteq K$ of diameter $\le 1$ that intersects at least $n+1$ elements of the cover $\F$.
\end{abstract} 

\maketitle

\section{Introduction}

The asymptotic dimension of a metric (more generally, coarse) space is an important number characteristic of the space, introduced by Gromov \cite{Gromov}. The asymptotic dimension is a coarse counterpart of the Lebesgue covering dimension of topological spaces. By a result of Holszty\'nski \cite{Hol} and Lifanov \cite{Lif} (see also \cite[1.8.K]{Eng}), for any compact Hausdorff spaces $X_1,\dots,X_n$ of dimension $\dim X_i\ge 1$ the product $X=X_1\times \dots \times X_n$ has covering dimension $\dim X\ge n$. In this paper we prove a counterpart of this result in the category of coarse spaces, answering a question of Protasov. Namely, we prove that for coarse spaces $X_1,\dots,X_n$ of asymptotic dimension $\asdim(X_i)\ge 1$, the product $X=X_1\times \dots \times X_n$ has asymptotic dimension $\asdim(X)\ge n$.
Also we prove the inequality $\asdim X\ge n$ for any finitary coarse space $X$ admitting an almost free action of the group $\IZ^n$. This implies that for any infinite set $X$ endowed with the finitary coarse structure $\E$ generated by the group $S_X$ of bijections  of $X$, the coarse space $(X,\E)$ has infinite asymptotic dimension. This answers another problem of Protasov.

We deduce these results from a combinatorial property of discrete boxes, which generalizes the Hex Theorem of Gale \cite{Gale}, and is a corollary of the Lebesgue Covering Lemma 1.8.20 in \cite{Eng}. Namely, we prove that for any cover $\F$ of the product $K=k_1\times\dots\times k_n$ of nonzero finite ordinals, either some set $F\in\F$ connects two opposite faces of the box $K$ or some set $B\subseteq K$ of diameter $\le 1$ meets more than $n$ elements of the cover $\F$. 

\section{Preliminaries}

In this section we recall the necessary definitions from Asymptology.
\smallskip

\subsection{Coarse spaces} An {\em entourage} on a set $X$ is any subset $E$ of the square $X\times X$ such that $\Delta_X\subseteq E=E^{-1}$ where $\Delta_X=\{(x,x):x\in X\}$ and $E^{-1}=\{(y,x):(x,y)\in E\}$. For two entourages $E,F$ on $X$ we put 
$$E\circ F:=\{(x,z):\exists y\;\;(x,y)\in E,\;(y,z)\in F\}.$$For an entourage $E$ on $X$ and a point $x\in X$, the set $$E[x]=\{y\in X:(x,y)\in E\}$$ is called the $E$-ball around $x$. 

A {\em coarse structure} on a set $X$ is a family $\E$ of entourages on $X$, satisfying the following axioms:
\begin{itemize}
\item for any $E,F\in\E$ we have $E\circ F\in \E$;
\item an entourage $F$ on $X$ belongs to $\E$ if there exists an entourage $E\in\E$ such that $F\subseteq E$.
%\item $\bigcup\E=X\times X$.
\end{itemize}
A subfamily $\mathcal B\subseteq\E$ is called a {\em base} of the coarse structure $\E$ if for each $E\in\E$ there exists $B\in\mathcal B$ such that $E\subseteq B$.

A {\em coarse space} is a pair $(X,\E)$ consisting of a set $X$ and a coarse structure $\E$ on $X$.

Each metric $d$ on $X$ generates the coarse structure
$$\E=\bigcup_{r\in[0,\infty)}\big\{E\subseteq X\times X:\Delta_X\subseteq E=E^{-1}\subseteq\{(x,y)\in X\times X:d(x,y)\le r\}\big\}$$
called the {\em canonical coarse structure} of the metric space $(X,d)$. This coarse structure is generated by the base
$$\mathcal B=\big\{\{(x,y)\in X\times X:d(x,y)\le r\}:r\ge 0\big\}.$$

For coarse spaces $(X_1,\E_1),\dots(X_n,\E_n)$, their product is the Cartesian product $X=X_1\times\dots\times X_n$ endowed with the coarse structure $\E$ generated by the base 
$$\mathcal B=\Big\{\big\{\big((x_i)_{i=1}^n,(y_i)_{i=1}^n\big)\in X\times X:\big((x_i,y_i)\big)_{i=1}^n\in\prod_{i=1}^nE_i\big\}:(E_i)_{i=1}^n\in\prod_{i=1}^n\E_i\Big\}.$$  
If for every $i\in\{1,\dots,n\}$ the coarse structure $\E_i$ is generated by a metric $d_i$, then the coarse structure $\E$ on $X=X_1\times \dots\times X_n$ is generated by the metric $$d:X\times X\to[0,\infty),\;\;d\big((x_i)_{i=1}^n,(y_i)_{i=1}^n\big)=\max_{1\le i\le n}d_i(x_i,y_i).$$ 
More information on coarse spaces can be found in \cite{PB}, \cite{PZ} and  \cite{Roe}.
 \medskip
 
\subsection{Finitary coarse spaces} A coarse space $(X,\E)$ is called
\begin{itemize}
\item {\em locally finite} if for any $E\in\E$ and $x\in X$ the $E$-ball $E[x]$ is finite;
\item {\em finitary} if $\sup_{x\in X}|E[x]|<\infty$ for any entourage $E\in\E$. 
\end{itemize}
Here $|E[x]|$ stands for the cardinality of the $E$-ball $E[x]$. It is clear that each finitary coarse space is locally finite.
 In some papers {\em finitary} coarse structures are called {\em uniformly locally finite}. 

Let us recall that a {\em $G$-space} is a pair $(X,G)$ consisting of a set $X$ and a subgroup $G$ of the group $S_X$ of all bijections of $X$. Each $G$-space $(X,G)$ carries a canonical finitary coarse structure $\E_G$, generated by the base consisting of the enourages
$$\{(x,y)\in X\times X:y\in \{x\}\cup Fx\cup F^{-1}x\}$$where $F$ runs over finite subsets of $G$. By Theorem 1 in \cite{Prot} (see also \cite{PP} and \cite{P19}), any  finitary coarse structure on a set $X$ is equal to the coarse structure $\E_G$ for a suitable subgroup $G\subseteq S_X$. This fundamental result of Protasov implies that for any set $X$, the coarse structure $\E_{S_X}$ is the largest finitary coarse structure on $X$. 

On the other hand, the largest locally finite coarse structure on $X$ consists of all entourages $E$ on $X$ such that for every $x\in X$ the sets $E[x]$ and $E^{-1}[x]$ are finite. For more information on the largest locally finite coarse structure, see \cite[\S6]{BP}.

We shall say that a coarse space $(X,\E)$ admits an {\em almost free action of a group $H$} if there exists an isomorphic copy $G\subseteq S_X$ of the group $H$ such that
$\E_G\subseteq \E$ and for any finite set $F\subseteq G\setminus\{e\}$ there exists a point $x\in X$ such that $\{g\in G:gx=x\}$ is disjoint with $F$. Here $e$ stands for the neutral element of the group $G$.

\subsection{Asymptotic dimension} Let $X$ be a coarse space and $\E$ be its coarse structure. A cover $\U$ of $X$ is called {\em uniformly bounded} if there exists an entourage $E\in\E$ such that $U\times U\subseteq E$ for every $U\in\U$.

A coarse space $X$ is defined to have {\em finite asymptotic dimension} if there exists a non-negative integer $n$ such that for any entourage $E\in\E$ there exists a uniformly bounded cover $\U$ of $X$ such that for every $x\in X$ the family $\U_x=\{U\in\U:E[x]\cap U\ne\emptyset\}$ has cardinality $|\U_x|\le n+1$. The smallest number $n$ with this property is called the {\em asymptotic dimension} of $X$ and is denoted by $\asdim(X)$. If $X$ fails to have finite asymptotic dimension, then we put $\asdim(X)=\infty$ and say that $X$ has {\em infinite asymptotic dimension}. Here we assume that $\infty \ge n$ for any $n\in\w$.

More information on asymptotic dimension can be found in the surveys \cite{BD}, \cite{BD2}, \cite{Grave}. These surveys mention many results establishing upper bounds for the asymptotic dimension of coarse spaces. However, we could not find general results giving lower bounds for the asymptotic dimension of products. In this note we provide such lower bounds.% (which has been already used in \cite{Prot}).
\medskip

\section{Main results}

The main results of this paper are the following theorems that answer two questions of Protasov (asked in an e-mail correspondence).

\begin{theorem}\label{t:main}
Let $X_1,\dots,X_n$ be coarse spaces of asymptotic dimension $\ge 1$. Then the product $X=X_1\times\dots\times X_n$ has $\asdim(X)\ge n$.
\end{theorem}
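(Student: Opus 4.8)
The plan is to argue by contradiction: assume $\asdim(X)\le n-1$ and derive a contradiction by embedding a large discrete box into $X$ and applying the combinatorial property of discrete boxes stated in the Introduction (the generalization of Gale's Hex Theorem).

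The first step is to reformulate the hypothesis $\asdim(X_i)\ge 1$. I claim that for each $i$ there is an entourage $E_i\in\E_i$ such that for every $F_i\in\E_i$ there exist points $p^i_0,p^i_1,\dots,p^i_{k_i-1}\in X_i$ (with some $k_i\ge 2$) satisfying $(p^i_{j-1},p^i_j)\in E_i$ for all $0<j<k_i$ but $(p^i_0,p^i_{k_i-1})\notin F_i$. To prove this I would take $E_i$ witnessing $\asdim(X_i)\ge 1$, i.e. an entourage such that no uniformly bounded cover of $X_i$ has $E_i$-multiplicity $\le 1$. If the claim failed for some $F_i$, then any two points of a common connected component of the graph with vertex set $X_i$ and edge set $E_i$ would be $F_i$-related; since every ball $E_i[x]$ lies in one such component, the partition of $X_i$ into these components would be a uniformly bounded cover of $E_i$-multiplicity $1$, contradicting the choice of $E_i$.

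Next I would set $E:=\prod_{i=1}^n E_i$, an entourage of the product coarse structure $\E$ on $X$, and use $\asdim(X)\le n-1$ to choose a uniformly bounded cover $\U$ of $X$ with $|\U_x|\le n$ for all $x\in X$; since $\E$ has a base of product entourages there are $F_i\in\E_i$ with $U\times U\subseteq\prod_{i=1}^n F_i$ for every $U\in\U$. For each $i$ the claim applied to this $F_i$ supplies points $p^i_0,\dots,p^i_{k_i-1}$, and I would form the discrete box $K=k_1\times\dots\times k_n$ together with the map $\phi\colon K\to X$, $\phi(j_1,\dots,j_n)=(p^1_{j_1},\dots,p^n_{j_n})$, so that $\F:=\{\phi^{-1}(U):U\in\U\}$ is a cover of $K$. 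The generalized Hex Theorem applies to $\F$, and its second alternative is impossible: if $B\subseteq K$ has diameter $\le 1$ then $B\subseteq\prod_{i=1}^n\{m_i,m_i+1\}$ for suitable $m_i$, and since $(p^i_{m_i},p^i_{m_i})\in E_i$ and $(p^i_{m_i},p^i_{m_i+1})\in E_i$ we get $\phi(B)\subseteq E[z]$ for $z=\phi(m_1,\dots,m_n)$; hence if $B$ met $n+1$ members of $\F$ then $|\U_z|\ge n+1$, a contradiction. Therefore the first alternative holds: some $U\in\U$ with $\phi^{-1}(U)\in\F$ contains a chain joining two opposite faces of $K$, necessarily the faces $\{j_i=0\}$ and $\{j_i=k_i-1\}$ for some $i$. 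Then $\phi^{-1}(U)$ meets both faces, so $U$ contains two points with $i$-th coordinates $p^i_0$ and $p^i_{k_i-1}$; from $U\times U\subseteq\prod_{l=1}^n F_l$ we get $(p^i_0,p^i_{k_i-1})\in F_i$, contradicting the choice of $p^i_0,\dots,p^i_{k_i-1}$. This contradiction gives $\asdim(X)\ge n$.

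The step I expect to be the main obstacle is the first one — extracting from the bare inequality $\asdim(X_i)\ge 1$ an entourage $E_i$ supporting $E_i$-chains with arbitrarily ``$F_i$-distant'' endpoints — along with the bookkeeping of the order of choices: the $E_i$ (hence $E$) must be fixed before $\U$ and the $F_i$ are produced, whereas the chains $p^i_\bullet$, the box $K$ and the map $\phi$ are chosen only afterwards, in terms of the $F_i$.
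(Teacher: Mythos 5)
Your proposal is correct and follows essentially the same route as the paper: the paper packages your first step as its Lemma~\ref{zero} (the chain characterization of asymptotic dimension zero) and your application of the generalized Hex theorem to the cover $\{\phi^{-1}(U)\}$ as its Lemma~\ref{l:higher}, but the substance — fixing the $E_i$ first, extracting the $F_i$ from a uniformly bounded cover of multiplicity $\le n$, building the box map from $E_i$-chains with $F_i$-distant endpoints, and ruling out the second alternative of Theorem~\ref{t:Hex} — is identical. The quantifier bookkeeping you flag as the main obstacle is handled exactly as you describe.
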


\begin{theorem}\label{t:main2} If for some positive integer $n$ a finitary coarse space $(X,\E)$ admits an almost free action of the group $\IZ^n$, then $\asdim(X,\E)\ge n$.
\end{theorem}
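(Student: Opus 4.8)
The plan is to derive Theorem~\ref{t:main2} from the combinatorial statement announced in the introduction: \emph{for every cover $\F$ of a discrete box $K=k_1\times\dots\times k_n$, either some $F\in\F$ contains a chain joining two opposite faces of $K$, or some $B\subseteq K$ with $\diam B\le 1$ meets at least $n+1$ members of $\F$}. The idea is to use the almost free $\IZ^n$-action to manufacture, inside $X$, a genuine combinatorial $n$-box on which a given uniformly bounded cover restricts to a cover with small pieces, and then read off the required $(n+1)$-fold overlap from the second alternative (the first alternative being ruled out by finitariness). Concretely, I fix an isomorphic copy $G\subseteq S_X$ of $\IZ^n$ with $\E_G\subseteq\E$ witnessing the almost free action, identify $G$ with $\IZ^n$ written additively, and put $S=\{-1,0,1\}^n\subseteq G$. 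Since $S=S^{-1}$ and $0\in S$, the set $E:=\{(y,z)\in X\times X:z\in Sy\}$ is an entourage lying in $\E_G\subseteq\E$, and $(y,gy)\in E$ for all $y\in X$ and $g\in S$. I claim this one entourage $E$ already forces $\asdim(X,\E)\ge n$; equivalently, no uniformly bounded cover $\U$ of $X$ satisfies $|\U_x|\le n$ for every $x\in X$, where $\U_x=\{U\in\U:E[x]\cap U\ne\emptyset\}$.

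So suppose $\U$ is uniformly bounded, with $U\times U\subseteq D$ for all $U\in\U$ and some $D\in\E$. Since $(X,\E)$ is finitary, $m:=\sup_{x\in X}|D[x]|<\infty$, and hence $|U|\le m$ for every nonempty $U\in\U$ (if $u\in U$ then $U\subseteq D[u]$). Put $k_1=\dots=k_n=m+1$, let $F_0=\{-m,\dots,m\}^n\setminus\{0\}$, a finite subset of $G$, and use almost freeness to choose $x_0\in X$ whose stabilizer in $G$ is disjoint from $F_0$. Then the map $\phi\colon K\to X$, $\phi(p)=p\cdot x_0$, on the box $K=k_1\times\dots\times k_n=\{0,\dots,m\}^n\subseteq\IZ^n$ is injective: if $\phi(p)=\phi(q)$ then $(p-q)\cdot x_0=x_0$, and $p-q\in F_0$ unless $p=q$. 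Pulling $\U$ back, $\F:=\{\phi^{-1}(U):U\in\U\}$ is a cover of $K$, and by injectivity of $\phi$ every member of $\F$ has at most $|U|\le m$ points.

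Now I apply the combinatorial lemma to $\F$. The first alternative cannot hold: a chain joining the two opposite faces of $K$ in some coordinate direction $i$ must contain a point in every layer $\{p\in K:p_i=t\}$, $t=0,\dots,k_i-1$ (the set of $i$-th coordinates of a connected subset of the grid is an integer interval), so such a chain has at least $k_i=m+1$ distinct points, whereas each member of $\F$ has at most $m$ points. Hence there is $B\subseteq K$ with $\diam B\le 1$ meeting $n+1$ distinct members $\phi^{-1}(U_1),\dots,\phi^{-1}(U_{n+1})$ of $\F$; choose $q_j\in B\cap\phi^{-1}(U_j)$ and any $b\in B$. Since $\diam B\le 1$ we have $q_j-b\in S$, so $\phi(q_j)=(q_j-b)\cdot\phi(b)\in S\phi(b)=E[\phi(b)]$, and therefore $U_j\cap E[\phi(b)]\ne\emptyset$ for every $j$, i.e. $|\U_{\phi(b)}|\ge n+1$. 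This contradicts $|\U_x|\le n$ and finishes the argument.

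The main obstacle is not any single computation but the correct bookkeeping of the alternating quantifiers in the definition of asymptotic dimension: the entourage $E$ must be chosen once and for all, independently of $\U$, whereas the box size $m+1$, the finite set $F_0$, and the base point $x_0$ are all allowed to depend on the cover $\U$ at hand. The almost-freeness hypothesis is exactly what supplies a base point $x_0$ making the chart $\phi$ injective on a box large enough that the ``chain'' alternative of the lemma is impossible; finitariness is exactly what makes ``large enough'' a finite number. A minor point to check carefully is that ``$\diam B\le 1$'' in the lemma (taken in the $\ell^\infty$-metric on $K$, consistent with the product coarse structure) indeed gives $q_j-b\in\{-1,0,1\}^n$. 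Modulo the combinatorial lemma itself — whose proof, via the Lebesgue Covering Lemma, is the genuinely substantial part of the paper — the reduction above is then complete.
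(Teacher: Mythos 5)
Your proof is correct and follows essentially the same route as the paper: the same entourage $E$ built from $\{-1,0,1\}^n$, the same injective box $\{0,\dots,m\}^n\to X$ obtained from almost freeness with $m$ supplied by finitariness, and the same cardinality count ruling out the ``connecting'' alternative of the Hex-type dichotomy (Theorem~\ref{t:Hex}). The only organizational difference is that you apply Theorem~\ref{t:Hex} directly to the pulled-back cover, whereas the paper factors this step through Lemma~\ref{l:higher}; the substance is identical.
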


\begin{corollary}\label{cor} For any infinite set $X$ and the group $G=S_X$ 
the coarse space $(X,\E_G)$ has infinite asymptotic dimension.
\end{corollary}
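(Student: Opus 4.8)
The plan is to deduce this corollary from Theorem~\ref{t:main2}: I will show that for \emph{every} positive integer $n$ the coarse space $(X,\E_G)$ with $G=S_X$ admits an almost free action of the group $\IZ^n$. Granting this, Theorem~\ref{t:main2} gives $\asdim(X,\E_G)\ge n$ for every $n$, and hence $\asdim(X,\E_G)=\infty$.

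To construct the required action, I would use that $X$ is infinite to fix an injection $\varphi\colon\IZ^n\to X$ and put $Y=\varphi(\IZ^n)$. Transporting the translation action of $\IZ^n$ on itself through $\varphi$ (so $a\in\IZ^n$ sends $\varphi(b)$ to $\varphi(a+b)$) and letting each element of $\IZ^n$ act identically on $X\setminus Y$ defines a homomorphism $\IZ^n\to S_X$, $a\mapsto\alpha_a$. It is injective, since $\alpha_a=\mathrm{id}$ forces $\varphi(a)=\varphi(0)$ and thus $a=0$; so $G'=\{\alpha_a:a\in\IZ^n\}$ is an isomorphic copy of $\IZ^n$ sitting inside $S_X$.

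Then I would verify the two clauses in the definition of an almost free action. The inclusion $\E_{G'}\subseteq\E_G$ holds because $G'\subseteq S_X$, so the finitary coarse structure $\E_{G'}$ is contained in $\E_{S_X}=\E_G$, which by Protasov's theorem recalled in the Preliminaries is the largest finitary coarse structure on $X$. For the freeness clause, note that the translation action on $Y$ is free, so the stabilizer $\{g\in G':gx_0=x_0\}$ of the point $x_0=\varphi(0)$ equals $\{e\}$ and is therefore disjoint from any subset $F\subseteq G'\setminus\{e\}$; in fact every point of $Y$ has trivial stabilizer. Thus $(X,\E_G)$ admits an action of $\IZ^n$ that is free on $Y$, in particular an almost free action, and Theorem~\ref{t:main2} finishes the proof.

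I do not anticipate a real obstacle: all the difficulty of the statement has been absorbed into Theorem~\ref{t:main2} and, through it, into the combinatorial lemma generalizing Gale's Hex Theorem. The only mild points of care are checking that extension by the identity off $Y$ still yields an injective homomorphism into $S_X$, and that $\E_{G'}\subseteq\E_G$; both follow immediately from the definitions together with the maximality of $\E_{S_X}$.
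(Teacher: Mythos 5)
Your proposal is correct and is essentially the intended derivation: the paper states Corollary~\ref{cor} as an immediate consequence of Theorem~\ref{t:main2}, the point being exactly that for every $n$ the symmetric group $S_X$ of an infinite set contains a copy of $\IZ^n$ acting with a free orbit (hence almost freely), and that $\E_{G'}\subseteq\E_{S_X}$ because $\E_{S_X}$ is the largest finitary coarse structure on $X$. Your explicit construction (translation on an embedded copy of $\IZ^n$, identity elsewhere) fills in these details correctly.
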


Corollary~\ref{cor} should be compared with Theorem 1 \cite{P-DZ} saying that for any infinite set $X$ and the largest locally finite coarse structure $\mathcal F$ on $X$, the coarse space $(X,\mathcal F)$ has $\asdim (X,\mathcal F)=1$. 

Theorems~\ref{t:main} and \ref{t:main2} will be proved in Section~\ref{s:pf1}, \ref{s:pf2} after some preliminary work made in Sections~\ref{s:d} and \ref{s:l}.

\section{A dimension dichotomy for discrete boxes}\label{s:d}

In this section we establish a dimension dichotomy for discrete boxes, which can be considered as a local generalization of the Hex Theorem of Gale \cite{Gale}. 

By a {\em discrete box of dimension $n$} we understand the product $K=k_1\times\dots\times k_n$ of nonzero finite ordinals $k_1,\dots,k_n$. The discrete box $K$ is endowed with the metric $d$ defined by 
$$d(x,y)=\max_{1\le i\le n}|x(i)-y(i)|.$$
Here we identify each finite ordinal $k$ with the subset $\{0,\dots,k-1\}$ of the smallest infinite ordinal $\w=\{0,1,2,\dots\}$. Elements of the product $K=k_1\times \dots\times k_n$ are functions $x:\{1,\dots,n\}\to\w$ such that $x(i)\in k_i=\{0,\dots,k_i-1\}$  for all $i\in\{1,\dots,n\}$. 

A subset $C$ of a box $K$ is defined to be {\em chain-connected} if for any points $x,y\in C$ there exists a sequence of points $x=x_0,\dots,x_m=y$ such that $d(x_{i-1},x_i)\le 1$ for any $i\in\{1,\dots,m\}$.
 
We say that a set $F\subseteq K$ {\em connects two opposite faces} of a box $K=k_1\times\dots\times k_n$ if there exist $i\in\{1,\dots,n\}$ and a sequence of points $x_0,\dots,x_m\in F$ such that $x_0(i)=0$, $x_m(i)=k_i-1$, and $d(x_{j-1},x_j)\le 1$ for all $j\in\{1,\dots,m\}$.  

By the Hex Theorem of Gale \cite{Gale}, for any discrete box $K$ of dimension $n$ and any cover $\F$ of $K$ of cardinality $|\F|\le n$, some set $F\in\F$ connects two opposite faces of $K$. The following theorem can be seen as a local generalization of the Hex Theorem of Gale.

\begin{theorem}\label{t:Hex} For any cover $\F$ of a discrete box $K$ of dimension $n$, either some set  $F\in\F$ connects two opposite faces of $K$, or there exists a subset $B\subseteq K$ of diameter $\le 1$ that intersects at least $n+1$ elements of the cover $\F$.
\end{theorem}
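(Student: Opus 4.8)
The plan is to transfer the problem to the geometric box $Q=[0,k_1-1]\times\dots\times[0,k_n-1]\subseteq\IR^n$ and then invoke the classical Lebesgue Covering Lemma \cite[1.8.20]{Eng}. If $k_i=1$ for some $i$, then the two opposite ``faces'' $\{x\in K:x(i)=0\}$ and $\{x\in K:x(i)=k_i-1\}$ coincide (both equal $K$), so any nonempty $F\in\F$ (one exists, since $\F$ covers the nonempty set $K$) connects two opposite faces of $K$ through a one-point chain, and the first alternative holds. So we may assume $k_i\ge2$ for every $i$; then $Q$ is affinely homeomorphic to the cube $[0,1]^n$ by a map taking each pair of opposite faces of $Q$ onto a pair of opposite faces of $[0,1]^n$. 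Aiming at the second alternative, assume that no $F\in\F$ connects two opposite faces of $K$.

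First I build a finite closed cover of $Q$. Let $\mathcal D$ be the (finite, since $K$ is finite) family of all chain-connected components of all sets $F\in\F$, that is, all maximal chain-connected subsets of members of $\F$. For $D\in\mathcal D$, let $D^*$ consist of all $p\in Q$ with $\max_{1\le i\le n}|p(i)-x(i)|\le\tfrac12$ for some $x\in D$; this is a finite union of axis-parallel closed unit cubes intersected with $Q$, hence closed. The sets $D^*$, $D\in\mathcal D$, cover $Q$: a point $p\in Q$ lies within $\sup$-distance $\tfrac12$ of the point $x\in K$ with $x(i)$ the nearest integer to $p(i)$, and this $x$ belongs to some $F\in\F$, hence to some $D\in\mathcal D$, so $p\in D^*$. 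The essential point is that no $D^*$ meets two opposite faces of $Q$: if $D^*$ met both $\{p\in Q:p(i)=0\}$ and $\{p\in Q:p(i)=k_i-1\}$, then, since the coordinates of points of $D\subseteq K$ are integers in $\{0,\dots,k_i-1\}$, the set $D$ would contain points $x,y$ with $x(i)=0$ and $y(i)=k_i-1$; lying in one chain-connected component of some $F\in\F$, the points $x$ and $y$ are joined by a chain inside $F$, so $F$ would connect two opposite faces of $K$, contrary to our assumption.

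Now I apply the Lebesgue Covering Lemma \cite[1.8.20]{Eng} to the finite closed cover $\{D^*:D\in\mathcal D\}$ of $Q\cong[0,1]^n$, no member of which meets two opposite faces: some point $p\in Q$ lies in at least $n+1$ of these sets, say $p\in D_0^*\cap\dots\cap D_n^*$ with $D_0,\dots,D_n$ pairwise distinct. For each $l$ choose $x_l\in D_l$ with $\max_i|p(i)-x_l(i)|\le\tfrac12$; then $d(x_l,x_{l'})\le1$ for all $l,l'$, so $B:=\{x_0,\dots,x_n\}$ has $\diam(B)\le1$. Also choose, for each $l$, a set $F_l\in\F$ having $D_l$ as a chain-connected component. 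These $F_l$ are pairwise distinct: were $F_l=F_{l'}$ with $l\ne l'$, the distinct components $D_l,D_{l'}$ of the single set $F_l$ would contain the points $x_l,x_{l'}$ with $d(x_l,x_{l'})\le1$, which is impossible since a pair of points at distance $\le1$ lies in a common chain-connected component. As $x_l\in D_l\subseteq F_l$, the set $B$ meets the $n+1$ distinct sets $F_0,\dots,F_n\in\F$, establishing the second alternative.

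I expect the main obstacle to be precisely the faithful translation of the chain condition ``$F$ connects two opposite faces of $K$'' into the geometric condition ``$D^*$ meets two opposite faces of $Q$'' that feeds the Lebesgue lemma; this is why the cover of $Q$ is assembled from chain-connected components rather than from the sets $F$ themselves, since thickening a whole $F$ could create a cover element touching two opposite faces of $Q$ even when $F$ does not connect them. The complementary elementary remark, that two distinct chain-connected components of a single set contain no pair of points at $\sup$-distance $\le1$, is what simultaneously guarantees the hypothesis of the lemma and lets us recover $n+1$ distinct members of $\F$ at the end. The remaining verifications (the degenerate case $k_i=1$, and that the $D^*$ genuinely cover $Q$) are routine.
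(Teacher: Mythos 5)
Your proof is correct and follows essentially the same route as the paper's: decompose each $F\in\F$ into chain-connected components, take closed $\tfrac12$-neighborhoods in the continuous box, and apply the Lebesgue Covering Lemma \cite[1.8.20]{Eng}. The only differences are organizational --- you run the argument in the contrapositive direction (negating the first alternative and concluding the second, where the paper negates the second and concludes the first) and you explicitly dispose of the degenerate case $k_i=1$, which the paper leaves implicit.
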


Theorem~\ref{t:Hex} will be derived from the following topological result that can be found in  \cite[1.8.20]{Eng}.

\begin{lemma}[Lebesgue Covering Lemma]\label{lem} If $\mathcal F$ is a finite closed cover of the cube $[0,1]^n$, no member of which meets two opposite faces of $[0,1]^n$, then there is a point $x\in[0,1]^n$ that belongs to at least $n+1$ elements of the cover $\F$.
\end{lemma}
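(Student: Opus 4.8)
The plan is to reduce the lemma to Brouwer's fixed point theorem, in the form of the \emph{essentiality of the identity map of the cube}: any continuous map $g\colon[0,1]^n\to[0,1]^n$ that sends each face of the cube into the corresponding face (the face $\{x_i=0\}$ into $\{y_i=0\}$ and $\{x_i=1\}$ into $\{y_i=1\}$ for every $i$) is surjective. This is equivalent to the non-existence of a retraction of $[0,1]^n$ onto its boundary, and it is the only genuinely topological input I would use; everything else is an explicit construction attached to the cover $\F$. Arguing by contradiction, I would assume the conclusion fails, i.e. that every point of $[0,1]^n$ belongs to at most $n$ members of $\F$, and then manufacture from $\F$ such a face-to-face map $g$ whose image is too small to be onto.

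First I would pass from the closed cover $\F=\{F_1,\dots,F_m\}$ to an open cover $\{U_1,\dots,U_m\}$ with $F_j\subseteq U_j$ having the \emph{same nerve} (a swelling, available by normality of the compact metric cube). Since the nerve is preserved, the open cover still has multiplicity $\le n$, so its nerve $N$ is a simplicial complex of dimension $\le n-1$; moreover, by shrinking each $U_j$ slightly I can keep $\overline{U_j}$ disjoint from whichever member of each opposite pair of faces $F_j$ already avoided, so that no $U_j$ meets two opposite faces either. Choosing a partition of unity $\{\phi_j\}$ subordinate to $\{U_j\}$ (with $\phi_j(x)>0\Rightarrow x\in U_j$), the canonical map $\kappa\colon[0,1]^n\to|N|$, $\kappa(x)=\sum_j\phi_j(x)v_j$, is well defined and lands in simplices of $|N|$ of dimension $\le n-1$.

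Next I would define a piecewise-linear map $h\colon|N|\to[0,1]^n$ on the vertices by prescribing the $i$-th coordinate of $h(v_j)$ to be $0$ if $U_j$ meets the face $\{x_i=0\}$, to be $1$ if $U_j$ meets the opposite face $\{x_i=1\}$, and to be $\tfrac12$ if $U_j$ meets neither; this is unambiguous precisely because no $U_j$ meets two opposite faces. Extending $h$ affinely over the simplices and setting $g:=h\circ\kappa$, one checks that $g$ sends each face of the cube into the corresponding face: at a point $x$ with $x_i=0$, every index $j$ with $\phi_j(x)>0$ satisfies $x\in U_j$, so $U_j$ meets $\{x_i=0\}$ and hence $h(v_j)_i=0$, giving $g(x)_i=0$, and symmetrically on the opposite face.

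The contradiction then comes from comparing two facts about $g$. On one hand, by the essentiality statement above, $g$ is surjective, so its image is all of $[0,1]^n$. On the other hand, $g([0,1]^n)\subseteq h(|N|)$, and since $\dim|N|\le n-1$ and $h$ is affine on each simplex, $h(|N|)$ is a finite union of convex cells of dimension $\le n-1$ in $\IR^n$ and therefore has $n$-dimensional Lebesgue measure zero; in particular it cannot equal $[0,1]^n$. This contradiction shows that the multiplicity of $\F$ is at least $n+1$, i.e. some point lies in at least $n+1$ members of $\F$. I expect the main obstacle to be the essentiality of face-to-face self-maps of the cube stated at the outset, which is where Brouwer's theorem is really used and which must be proved or cited with care, together with the verification that the swelling can be carried out while simultaneously preserving the nerve and the property that no member meets two opposite faces.
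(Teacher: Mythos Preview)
Your argument is essentially the classical proof of the Lebesgue Covering Lemma and is correct. Note, however, that the paper does not prove this lemma at all: it simply quotes it as the known result \cite[1.8.20]{Eng} and uses it as a black box in the proof of Theorem~\ref{t:Hex}. So there is no ``paper's own proof'' to compare against; you have supplied what the paper outsources to Engelking.

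A couple of remarks on the execution. The swelling step is fine, but the order of operations should be: first swell $\{F_j\}$ to an open cover $\{U_j'\}$ with the same nerve, and then replace each $U_j'$ by $U_j:=U_j'\cap W_j$, where $W_j$ is an open neighbourhood of $F_j$ avoiding every face that $F_j$ avoids. Since $F_j\subseteq U_j\subseteq U_j'$, the nerve of $\{U_j\}$ is sandwiched between that of $\{F_j\}$ and that of $\{U_j'\}$, hence unchanged; this is cleaner than ``shrinking slightly'' after the fact. For the essentiality of face-to-face self-maps of the cube, the quickest justification is the straight-line homotopy $H(x,t)=(1-t)x+tg(x)$, which preserves each face for all $t$; hence $g|_{\partial[0,1]^n}$ is homotopic to the identity as a self-map of the boundary sphere, so if $g$ missed an interior point one would obtain a retraction of the cube onto its boundary, contradicting Brouwer. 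With these points made explicit, your proof goes through without difficulty.
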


\begin{proof}[Proof of Theorem~\ref{t:Hex}] Let $\F$ be a cover of a discrete box $K=k_1\times \dots\times k_n$ of dimension $n$ such that for every subset $B\subseteq K$ of diameter $\le 1$ meets at most $n$ elements of the cover $\
F$. We should prove that some set $F\in\F$ connects two opposite faces of the box $K$.

For every $F\in \F$ and $x\in F$ let $F(x)$ be the set of all point $y\in F$ for which there exists a chain $x=x_0,x_1,\dots x_m=y$ in $F$ such that $d(x_{i-1},x_i)\le 1$ for all $i\in\{1,\dots,m\}$. Then $\U_F=\{F(x):x\in F\}$ is a partition of $F$ into chain-connected sets such that $$\dist(F(x),F(y))=\min\{d(u,v):u\in F(x),\;v\in F(y)\}\ge 2$$ for any distinct sets $F(x),F(y)\in\U_F$.

It follows that $\U=\bigcup_{F\in\F}\U_F$ is a cover of $K$. The discrete box $K=k_1\times\dots\times k_n$ can be considered as the ``integer part'' of the ``continuous'' box $\widetilde K=\prod_{i=1}^n[0,k_i-1]$. The box $\widetilde K$ is endowed with the metric $$\tilde d(x,y)=\max_{1\le i\le n}|x(i)-y(y)|.$$
Here we identify the elements of the box $\widetilde K$ with functions $x:\{1,\dots,n\}\to\IR$ such that $0\le x(i)\le k_i-1$ for all $i\in\{1,\dots,n\}$.   

For every subset $A\subseteq K$ consider its closed $\frac12$-neighborhood $$\widetilde A=\{x\in \widetilde K:\exists a\in A\;\;\tilde d(x,a)\le\tfrac12\}$$in $\widetilde K$. Taking into account that $\tilde d(a,b)\ge 1$ for any distinct points of $K$, we conclude that for two sets $A,B\subseteq K$ are equal if and only if $\widetilde A=\widetilde B$. 

It follows that $\widetilde \U=\{\widetilde U:U\in\U\}$ is a finite closed cover of $\widetilde K$. We claim that for every $x\in\widetilde K$ the family $\U_x=\{U\in\U:x\in \widetilde U\}$ has cardinality $\le n$. To derive a contradiction, assume that $|\U_x|>n$ for some $x\in\widetilde K$. For every $U\in\U_x$ choose a point $y_U\in U$ with $\tilde d(x,y_U)\le\frac12$ and find a set $F_U\in \F$ such that $U=F_U(y_U)$. Since the set $Y=\{y_U:U\in\U_x\}$ has diameter $\le 1$, our assumption guarantees that the family $\F'=\{F\in\F:F\cap Y\ne\emptyset\}$ has cardinality $|\F'|\le n$. Since $\{F_U:U\in\U_x\}\subseteq\F'$, by the Pigeonhole Principle, there exists a set $F\in\F'$ such that $F=F_U=F_{V}$ for some distinct sets $U,V\in\U_x$. Then $U=F(y_U)$ and $V=F(y_{V})$. Observing that
$$d(y_U,y_{V})\le \tilde d(y_U,x)+\tilde d(x,y_{V})\le\tfrac12+\tfrac12=1,$$
we conclude that $U=F(y_U)=F(y_{V})=V$, which contradicts the choice of the sets $U,V$. This contradiction shows that $|\U_x|\le n$.  Then by Lemma~\ref{lem}, there exists a set $U\in\U$ such that the set $\widetilde U$ meets two opposite faces of $\widetilde K$. Then the set $U$ meets two opposite faces of the box $K$ and being chain-connected, connects these two opposite faces of $K$. Choose any point $u\in U$ and find a set $F\in\F$ such that $U=F(u)$. Since $U=F(u)\subseteq F$, the set $F$ connects two opposite faces of the box $K$.
\end{proof}

\section{Two Lemmas related to Asymptotic dimension}\label{s:l}

In the proof of Theorems~\ref{t:main} and \ref{t:main2} we shall exploit two lemmas. The first of them characterizes coarse spaces of asymptotic dimension zero in terms of chains. For an entourage $E$ on a set $X$, by an {\em $E$-chain} we understand  a sequence of points $x_0,\dots,x_n$ in $X$ such that $(x_{i-1},x_i)\in E$ for every $i\in\{1,\dots,n\}$.

\begin{lemma}\label{zero} A coarse space $(X,\E)$ has asymptotic dimension zero if and only if for any entourage $E\in \E$ there exists an entourage $F\in\E$ such that $(x_0,x_n)\in F$ for any $E$-chain $x_0,\dots,x_n$ in $X$.
\end{lemma}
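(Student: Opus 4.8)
The plan is to prove both implications by unwinding the definition of asymptotic dimension in the special case $n=0$.

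For the ``only if'' direction, suppose $\asdim(X,\E)=0$ and fix an entourage $E\in\E$. By definition there is a uniformly bounded cover $\U$ of $X$ such that for every $x\in X$ the family $\U_x=\{U\in\U:E[x]\cap U\neq\emptyset\}$ has cardinality $\le 1$. I would first observe that this forces $E[x]$ to be contained in a single member of $\U$: since $x\in E[x]$, some $U\in\U$ contains $x$, hence $U\in\U_x$, and $|\U_x|\le 1$ means $U$ is the \emph{only} element of $\U$ meeting $E[x]$; as $\U$ covers $X$, every point of $E[x]$ lies in $U$, so $E[x]\subseteq U$. Consequently, if $(x_{i-1},x_i)\in E$ for $i=1,\dots,n$, then each consecutive pair $x_{i-1},x_i$ lies in a common member of $\U$ (namely any $U\ni x_{i-1}$ with $E[x_{i-1}]\subseteq U$ contains $x_i$), and since distinct members of $\U$ are disjoint on $E$-balls --- more carefully, since $|\U_x|\le1$ each point lies in a \emph{unique} member of $\U$ --- all of $x_0,\dots,x_n$ lie in one common set $U\in\U$. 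Let $F$ be an entourage witnessing that $\U$ is uniformly bounded, i.e.\ $U\times U\subseteq F$ for all $U\in\U$; enlarging $F$ if necessary we may take $F\in\E$ symmetric and containing $\Delta_X$. Then $(x_0,x_n)\in U\times U\subseteq F$, as required.

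For the ``if'' direction, fix $E\in\E$; without loss of generality $E$ is symmetric. Apply the hypothesis to the entourage $E\circ E\in\E$ (or just $E$ itself) to obtain $F\in\E$ with $(x_0,x_n)\in F$ for every $E$-chain. Define a relation on $X$ by declaring $x\sim y$ iff there is an $E$-chain from $x$ to $y$; this is an equivalence relation (reflexivity from the trivial chain, symmetry from symmetry of $E$, transitivity by concatenation), so it partitions $X$ into ``$E$-chain-components'' $\U$. By the choice of $F$, each component $U$ satisfies $U\times U\subseteq F$, so $\U$ is uniformly bounded. Finally, for any $x$, if $U\in\U$ meets $E[x]$, say $y\in U\cap E[x]$, then $(x,y)\in E$ is a one-step $E$-chain, so $x\sim y$ and $x\in U$; thus the unique component containing $x$ is the only member of $\U$ meeting $E[x]$, giving $|\U_x|\le 1$. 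Hence $\asdim(X,\E)\le 0$, and since the empty space is the only space of negative asymptotic dimension (which we may exclude, or note $\asdim\ge0$ always holds for nonempty $X$), we get $\asdim(X,\E)=0$.

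The argument is essentially a routine unpacking of definitions; the one point demanding slight care is the ``only if'' direction, where one must argue that an $E$-ball lies inside a single member of $\U$ and then propagate this along a chain to conclude all chain vertices share one member of $\U$ --- this uses crucially that $|\U_x|\le 1$ (not merely $\le n+1$ for general $n$), so that membership in $\U$ is single-valued on points and the chain cannot ``hop'' between sets. No genuine obstacle is expected.
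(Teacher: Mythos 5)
Your proof is correct and takes essentially the same route as the paper: the ``only if'' direction propagates membership in a single element of the cover along the $E$-chain by induction, and the ``if'' direction takes the $E$-chain components as the uniformly bounded cover witnessing $\asdim=0$. The only cosmetic difference is that you check $U\times U\subseteq F$ directly for each component, whereas the paper records $U_x\subseteq F[x]$; both give uniform boundedness.
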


\begin{proof} To prove the ``only if'' part, assume that $\asdim (X,\E)=0$. Then for every $E\in\E$ there exists a uniformly bounded cover $\U$ of $X$ such that for every $x\in X$ the $E$-ball $E[x]$ meets a unique set $U\in\U$.
By the uniform boundedness of $\U$, there exists an entourage $F\in\E$ such that $U\times U\subseteq F$ for every $U\in\U$. Now take any $E$-chain $x_0,\dots,x_n\in X$. Find a unique set $U\in\U$ containing the point $x_0$. By induction we shall show that $x_i\in U$ for any $i\le n$. For $i=0$ this follows from the choice of $U$. Assume that for positive $i<n$ we have proved that $x_{i}$ belongs to $U$. By the choice of the cover $\U$, the $E$-ball $E[x_{i}]$ does not intersect any set of the family $\U\setminus\{U\}$ and hence $x_{i+1}\in E[x_i]\subseteq U$. Therefore $x_n\in U$ and $(x_0,x_n)\in U\times U\subseteq F$.
\smallskip

To prove the ``if'' part, assume that for any entourage $E\in \E$ there exists an entourage $F\in\E$ such that $(x_0,x_n)\in F$ for any $E$-chain $x_0,\dots,x_n$ in $X$. For every $x\in X$ let $U_x$ be the set of all points $y\in X$ for which there exists an $E$-chain $x=x_0,x_1,\dots,x_n=y$. The choice of the entourage $F$ guarantees that $U_x\subseteq F[x]$ and hence the cover $\U=\{U_x:x\in X\}$ of $X$ is uniformly bounded. It is easy to see that for each $x\in X$ the set $U_x$ is the unique set in $\U$ that meets the $E$-ball $E[x]$. Therefore, the cover $\U$ witnesses that $\asdim (X,\E)=0$.
\end{proof}
  
In the following lemma we extend the ``only if'' part of the characterization given in Lemma~\ref{zero}  to higher dimensions. 

Let $E$ be an entourage on a set $X$. An {\em $E$-box of dimension $n$} in $X$ is a map $f:K\to X$ defined on some discrete box $K$ of dimension $n$ such that $(f(x),f(y))\in E$ for any points $x,y\in K$ with $d(x,y)\le 1$.

\begin{lemma}\label{l:higher} If for some positive interger $n$, a coarse space $(X,\E)$ 
has $\asdim(X,\E)<n$, then for every entourage $E\in\E$ there exists an entourage $F\in\E$ such that for any $E$-box $f:K\to X$ of dimension $n$ there exists a set $V\subseteq K$ connecting two opposite faces of $K$ such that $f(V)\times f(V)\subseteq  F$.
\end{lemma}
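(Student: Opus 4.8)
The plan is to deduce the lemma from Theorem~\ref{t:Hex} by pulling back, along the given $E$-box, a uniformly bounded cover of $X$ of low multiplicity. First I would invoke the hypothesis $\asdim(X,\E)<n$ for the fixed entourage $E\in\E$: it yields a uniformly bounded cover $\U$ of $X$ such that for every $x\in X$ the family $\U_x=\{U\in\U:E[x]\cap U\ne\emptyset\}$ has cardinality $|\U_x|\le n$. Using the uniform boundedness of $\U$, I would fix an entourage $F\in\E$ with $U\times U\subseteq F$ for all $U\in\U$. Note that this $F$ is chosen from $E$ alone, before any $E$-box is considered, which is exactly the order of quantifiers demanded by the statement.

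Next, given an arbitrary $E$-box $f:K\to X$ of dimension $n$, I would consider the pullback family $\F=\{f^{-1}(U):U\in\U\}$; since $\U$ covers $X$, the family $\F$ is a cover of the discrete box $K$. The crucial step is to verify that $\F$ cannot fall into the second alternative of Theorem~\ref{t:Hex}, i.e., that no $B\subseteq K$ with $\diam(B)\le 1$ meets $n+1$ members of $\F$. Indeed, fix such a $B$ and a point $x\in B$. Because $f$ is an $E$-box and every $y\in B$ satisfies $d(x,y)\le 1$, we get $(f(x),f(y))\in E$, hence $f(B)\subseteq E[f(x)]$. Therefore, whenever $f^{-1}(U)\cap B\ne\emptyset$ we have $U\cap E[f(x)]\ne\emptyset$, i.e., $U\in\U_{f(x)}$; so the set of $U\in\U$ with $f^{-1}(U)\cap B\ne\emptyset$ has at most $|\U_{f(x)}|\le n$ elements, and consequently at most $n$ distinct sets of $\F$ meet $B$.

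By Theorem~\ref{t:Hex}, some member $f^{-1}(U)\in\F$ then connects two opposite faces of $K$. Setting $V=f^{-1}(U)$ finishes the argument: $V$ connects two opposite faces of $K$ by the previous sentence, while $f(V)\subseteq U$ gives $f(V)\times f(V)\subseteq U\times U\subseteq F$. The only point requiring any real attention is the multiplicity estimate in the second paragraph: one passes from a bound on the multiplicity of $\U$ on $E$-balls in $X$ to a bound on the multiplicity of $\F$ on diameter-$\le 1$ subsets of $K$, and the $E$-box condition on $f$ is precisely what makes this transfer legitimate (one obtains only $f(B)\subseteq E[f(x)]$, not that $f(B)$ itself has small diameter, but this inclusion is all that is needed).
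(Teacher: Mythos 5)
Your proposal is correct and follows essentially the same route as the paper's own proof: pull back the multiplicity-$\le n$ uniformly bounded cover along the $E$-box, transfer the multiplicity bound to diameter-$\le 1$ subsets of $K$ via the inclusion $f(B)\subseteq E[f(x)]$, and apply Theorem~\ref{t:Hex}. The order of quantifiers (choosing $F$ from $E$ alone, before the $E$-box is given) is handled exactly as in the paper.
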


\begin{proof} Assuming that $\asdim(X,\E)<n$, for any entourage $E\in\E$ we can find a uniformly bounded cover $\U$ of $X$ such that $|U\in\U:E[x]\cap U\ne\emptyset\}|\le n$ for every $x\in X$. By the uniform boundedness of $\U$, there exists an entourage $F\in\E$ such that $U\times U\subseteq F$ for every $U\in\U$. We claim that the entourage $F$ has the required property. Let $f:K\to X$ be any $E$-box of dimension $n$ in $X$. The cover $\U$ of $X$ induces the cover $\V=\{f^{-1}(U):U\in\U\}\setminus\{\emptyset\}$ of $K$. Consider the entourage $B=\{(x,y)\in K:d(x,y)\le 1\}$ on the discrete box $K$. The definition on an $E$-box implies that for every $x\in K$ we have $f(B[x])\subseteq E(f[x])$ and hence $\{U\in\U:f^{-1}(U)\cap B[x]\ne\emptyset\}\subseteq \{U\in \U:U\cap E[f(x)]\ne\emptyset\}$, which implies that $|\{V\in\V:V\cap B[x]\ne\emptyset\}|\le n$ for every $x\in K$. By Theorem~\ref{t:Hex}, some set $V\in\V$ connects two opposite faces of the box $K$.
It is clear that $f(V)\times f(V)\subseteq F$. 
\end{proof}

\section{Proof of Theorem~\ref{t:main}}\label{s:pf1}

Let $X_1,\dots,X_n$ be coarse spaces of asymptotic dimension $\ge 1$ and $X=X_1\times\dots\times X_n$ be their products. Let $\E$ be the course structure of the space $X$ and for every $i\in\{1,\dots,n\}$ let $\E_i$ be the coarse structure of the coarse space $X_i$. 

By Lemma~\ref{zero}, for every $i\in\{1,\dots,n\}$ there exists an entourage $E_i\in\E_i$ such that for every $D_i\in\E_i$ there exists an $E_i$-chain $x_0,\dots,x_n\in X$ such that $(x_0,x_n)\notin D_i$. The entourages $E_1,\dots,E_n$ induce the entourage $$E=\{((x_i)_{i=1}^n,(y_i)_{i=1}^n)\in X\times X:((x_i,y_i))_{i=1}^n\in \prod_{i=1}^n E_i\}\in\E.$$

Assuming that $\asdim(X)<n$ we can apply Lemma~\ref{l:higher} and find an entourage $F$ such that such that for any $E$-box $f:K\to X$ of dimension $n$ there exists a set $C\subseteq K$ connecting two opposite faces of $K$ such that $f(C)\times f(C)\subseteq F$. Replacing $F$ by a larger entourage, we can assume that
$$F=\{((x_i)_{i=1}^n,(y_i)_{i=1}^n)\in X\times X:((x_i,y_i))_{i=1}^n\in \prod_{i=1}^n F_i\}$$
for some entourages $F_1\in\E_1,\dots,F_n\in\E_n$.

For every $i\in\{1,\dots,n\}$ the choice of the entourage $E_i$ yields an $E_i$-chain $x_{0,i},\dots,x_{m_i,i}\in X_i$ such that  $(x_{0,i},x_{m_i,i})\notin  F_i$.

Consider the discrete box $K=(m_1+1)\times\dots\times (m_n+1)$, and the map 
$$f:K\to X,\;\;f:(k_1,\dots,k_n)\mapsto (x_{k_1,1},\dots,x_{k_n,n}),$$
which is an $E$-box of dimension $n$ in $X$. By the choice of the entourage $F$, 
there exists a set $C\subseteq K$ that connects two opposite faces of the box $K$ and has $f(C)\times f(C)\subseteq F$. Then for some $i\in\{1,\dots,n\}$, the projection of $C$ onto the $i$-th coordinate axis coincides with the set $m_i+1=\{0,\dots,m_i\}$ and the projection of $f(C)$ onto the $i$-coordinate coincides with the chain $x_{0,i},\dots,x_{m_i,i}$. Taking into account that $f(C)\times f(C)\subseteq F$, we conclude that $(x_{0,i},x_{m_i,i})\in F_i$, which contradicts the choice of the chain $x_{0,i},\dots,x_{m_i,i}$. This contradiction completes the proof of the inequality $\asdim(X)\ge n$.

\section{Proof of Theorem~\ref{t:main2}}\label{s:pf2}

Let $(X,\E)$ be a finitary coarse space admitting an almost free action of the group $\IZ^n$. Then $\IZ^n$ is isomorphic to a subgroup $G\subseteq S_X$ such that $\E_G\subseteq \E$ and for any finite set $F\subseteq G\setminus\{e\}$ there exists a point $x\in X$ such that the subgroup $\{g\in G:gx=x\}$ is disjoint with $F$. 

 Let $h:\IZ^n\to G$ be an isomorphism. Since $\E_G\subseteq\E$, the entourage $$E=\{(x,y)\in X\times X:y\in h(\{-1,0,1\}^n)\cdot x\}$$belongs to the coarse structure $\E$. Assuming that $\asdim (X,\E)<n$, we can apply Lemma~\ref{l:higher} and find an entourage $F\in\E$ such that any $E$-box $f:K\to X$ of dimension $n$ contains a subset $C\subseteq K$ that connects two opposite faces of $K$ and has $f(C)\times f(C)\subseteq F$.
 
Since the coarse structure $\E$ is finitary, the number $m=\sup_{x\in X}|F[x]|$ is finite. Since the action of the group $G$ is almost free, there exists a point $x\in X$ such that the subgroup $\{g\in G:gx=x\}$ of $G$ is disjoint with the finite set $h(\{0,\dots,m\}^n\setminus \{0\}^n)$. Then the map $f:\{0,\dots,m\}^n\to X$, $f:z\mapsto h(z)\cdot x$, is an injective $E$-box of dimension $n$ in $X$. By the choice of the entourage $F$, there exists a set $C\subseteq \{0,\dots,m\}^n$ that connects two opposite faces of the discrete box $\{0,\dots,m\}^n$ and has $f(C)\times f(C)\subseteq F$. Then $f(C)\subseteq F[f(c)]$ for any $c\in C$. The injectivity of the map $f$ implies that $|C|\le|F[f(c)]|\le m$ which is not possible as any set connecting two opposite faces of the box $\{0,\dots,m\}^n$ has cardinality $>m$. This contradiction shows that $\asdim(X,\E)\ge n$.

\section{Acknowledgement}

The authors express their sincere thanks to Igor Protasov for inspiring and stimulating questions.

%\newpage

\end{document}